\newcolumntype{V}{!{\vrule width 2pt}}
\numberwithin{equation}{section}
\def\blue{\textcolor{blue}}
\def\red{\textcolor{red}}
\theoremstyle{plain}
\newtheorem{theorem}{Theorem}[section]
\newtheorem{remark}[theorem]{Remark}
\newtheorem{lemma}[theorem]{Lemma}
\newtheorem{definition}[theorem]{Definition}
\def\A{\mathcal{A}}
\def\U{\mathcal{U}}
\def\IP{\mathcal{IP}}
\def\UP{\mathcal{UP}}
\def\CP{\mathcal{CP}}
\def\inv{\mathsf{inv}}
\def\des{\mathsf{des}}
\def\h{\mathsf{h}}
\def\ind{\mathsf{ind}}
\def\wt{\mathrm{wt}}
\def\sgn{\mathsf{sgn}}
\def\LB{\mathcal{LB}}
\def\IB{\mathcal{IB}}
\begin{document}
\title[An involution for a Catalan-tangent number identity]{An involution for
a Catalan-tangent number identity}
\author[D. Kim]{Dongsu Kim}
\address[Dongsu Kim]{Department of Mathematical Sciences, Gwangju Institute of Science
and Technology, Gwangju 61005, Republic of Korea}
%{Department of Mathematical Sciences, Korea Advanced Institute of Science
%and Technology, Daejeon 34141, Republic of Korea} 
\email{dongsukim@gist.ac.kr, dongsu.kim@kaist.ac.kr}
\author[Z. Lin]{Zhicong Lin}
\address[Zhicong Lin]{Research Center for Mathematics and Interdisciplinary Sciences,
Shandong University \& Frontiers Science Center for Nonlinear Expectations,
Ministry of Education, Qingdao 266237, P.R. China}
\email{linz@sdu.edu.cn}
\date{\today}
\begin{abstract} We provide an involution proof of a Catalan-tangent number identity arising from
the study of peak algebra that was found by Aliniaeifard and Li. In the course, we find a new
combinatorial identity for the tangent numbers $T_{2n+1}$:
$$
\sum_{k=0}^{n}(-1)^{k}{2n+1\choose 2k}2^{2n-2k}T_{2k+1}=(-1)^nT_{2n+1}.
$$
Moreover, we derive two different $q$-analogs of the above identity from the combinatorial perspective.
\end{abstract}
\keywords{Catalan numbers, Tangent numbers, Odd set compositions, Unimodal permutations,
Complete binary trees.
\newline \indent 2020 {\it Mathematics Subject Classification}. 05A05, 05A19.}
\maketitle
\section{Introduction}
%%%%%%%%%%%%%%%%%%%%%%%%%%%%%%%%%%%%
For a positive integer $n$, let $[n]:=\{1,2,\ldots,n\}$. A {\em set composition} $\phi$ of $[n]$,
denoted by $\phi\vDash[n]$, is a list of mutually disjoint nonempty subsets
$\phi_1/\phi_2/\ldots/\phi_{\ell}$ of $[n]$ whose union is $[n]$.
Each $\phi_i$ is called a {\em block} of $\phi$ and the number of blocks of $\phi$ is denoted
by $\ell(\phi)$.
A set composition $\phi$ is said to be {\em odd} if each block of $\phi$ consists of odd number
of elements. For instance, $\phi=7/148/9/6/235$ is an odd set composition of $[9]$ with $\ell(\phi)=5$. 
Let
$$
O(n,k):=|\{\phi\vDash[n]: \phi\text{ odd}, \ell(\phi)=k\}|.
$$
Note that $O(n,k)=0$ if $n$ and $k$ have different parity. By the compositional formula for
the exponential generating functions (see~\cite[Proposition~5.1.3]{St}), we have 
\begin{align*}
&\quad\sum_{n,k} O(n,k)t^k\frac{x^n}{n!}=\frac{t\sinh(x)}{1-t\sinh(x)}\\
&=tx+2t^2\frac{x^2}{2!}+(t+6t^3)\frac{x^3}{3!}+(8t^2+24t^4)\frac{x^3}{3!}+
(t+60t^3+120t^5)\frac{x^5}{5!}+\cdots.
\end{align*}

The {\em Catalan numbers} 
$$
\left(C_n=\frac{1}{n+1}{2n\choose n}\right)_{n\geq0}=(1,1,2,5,14,42,132,\ldots)
$$
and the {\em tangent numbers} $\{T_{2n+1}\}_{n\geq0}$ defined by the Taylor expansion
$$
\tan(x)=\sum_{n\geq0}T_{2n+1}\frac{x^{2n+1}}{(2n+1)!}=
x+2\frac{x^3}{3!}+16\frac{x^5}{5!}+272\frac{x^7}{7!}+7936\frac{x^9}{9!}+\cdots
$$
are well-known combinatorial sequences in enumerative combinatorics.

The following identity, which connects Catalan numbers with tangent numbers via $O(n,k)$,
was found by Aliniaeifard and Li~\cite{AL} in their study of non-commutative peak algebra
and later proved by Zhao, Lin and Zang~\cite{ZLZ} using generating functions.

\begin{theorem}\label{thm:Cat-Tan}
For $n\geq0$, we have 
\begin{equation}\label{eq:Cat-Tan}
\sum_{k=0}^n (-1)^kO(2n+1,2k+1)2^{2n-2k}C_k=(-1)^nT_{2n+1}. 
\end{equation}
\end{theorem}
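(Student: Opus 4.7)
The promised involution proof suggests attacking \eqref{eq:Cat-Tan} by constructing a signed combinatorial model $\mathcal{M}_n$ whose signed cardinality is the left-hand side, together with a sign-reversing involution $\iota$ on $\mathcal{M}_n$ whose fixed points biject with alternating permutations of $[2n+1]$ (the combinatorial avatar of $T_{2n+1}$), all carrying the common sign $(-1)^n$.

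To build $\mathcal{M}_n$, I would use the fact that a block of odd size $s$ admits exactly $2^{s-1}$ \emph{unimodal} orderings (strictly increasing up to a peak, then strictly decreasing), so $O(2n+1,2k+1)\cdot 2^{2n-2k}$ enumerates ordered sequences $(w_1,\dots,w_{2k+1})$ of odd-length unimodal words whose underlying letters form a set composition of $[2n+1]$. The factor $C_k$ is realized as the number of plane complete binary trees with $k+1$ leaves (equivalently, $2k+1$ nodes). Pair each unimodal sequence with such a tree by identifying the $2k+1$ nodes, read in in-order, with $w_1,\dots,w_{2k+1}$, and weight the resulting object by $(-1)^k$ to obtain $\mathcal{M}_n$.

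The involution $\iota$ scans the tree in a prescribed canonical order and looks for the first spot admitting a local move of one of two dual types: (a) merging two adjacent unimodal blocks, joined through a common internal node of the tree, whose concatenation is again a unimodal word of odd length, into a single longer block while deleting that internal node; or (b) splitting a single block at a canonically determined breakpoint into two adjacent odd unimodal fragments while expanding the node hosting it into an internal node with two children. These moves are designed to be mutually inverse and to change $k$ (hence the sign) by $\pm 1$, so $\iota$ is sign-reversing on non-fixed elements.

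The fixed points of $\iota$ are the pairs $(w,T)$ admitting no merge and no canonical split; a direct case analysis should show that they correspond exactly to alternating permutations of $[2n+1]$, each arising from a unique rigid configuration, all of sign $(-1)^n$. The principal obstacle will be Step~(3): articulating the canonical ``mergeable-pair'' and ``splittable-block'' conditions so that the operations are genuine inverses and so that the rigid configurations coincide with the alternating permutations with the correct uniform sign. The delicate points will be the choice of traversal order on the tree, the handling of singleton blocks (which cannot be split), and the treatment of the extremal blocks at the left and right boundaries of the sequence; getting these three ingredients to fit together consistently is where the creative work lies, and it is also what will guide the discovery of the new tangent-number identity stated in the abstract.
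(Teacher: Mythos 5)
Your signed model $\mathcal{M}_n$ is exactly the one the paper uses: complete binary trees with $2k+1$ nodes whose nodes are labeled, in in-order, by odd unimodal words partitioning $[2n+1]$, with sign $(-1)^k$; the count $O(2n+1,2k+1)\,2^{2n-2k}C_k$ and the identification of fixed points with increasing binary trees (equivalently alternating permutations, all of sign $(-1)^n$) are all correct and coincide with Lemma~\ref{lem:sign}. So the setup is sound and is the same route as the paper.

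The gap is that the involution itself --- the entire content of the proof --- is not constructed, and the two local moves you do describe cannot by themselves form an involution. First, a structural point: in a \emph{complete} binary tree every internal node has two children, so one cannot ``merge two adjacent blocks and delete that internal node''; the only structure-preserving merge collapses a node together with \emph{both} of its children (which must both be leaves) into a single node, changing the node count by $2$. This is what the paper's cases (I)/(II) do. Second, and more seriously, merge/split of whole blocks does not cover all non-fixed objects: take the third tree of Fig.~\ref{ex:lbt} and look at the node labeled $4$. Its label is a singleton, so it cannot be split; its right child (labeled $6$) is an internal node, so there is no triple of leaf labels to merge; yet the subtree rooted there is not increasing, so the tree must be moved by the involution. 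The paper resolves exactly this situation with its case (III), which transfers \emph{two letters at a time} between a node's label and a right (or left) chain of a descendant, restructuring the subtree in the process, and which requires the ``left/right indicator'' bookkeeping to decide on which side to act so that the operation is self-inverse. Designing these finer moves and verifying that the side-selection is stable under the map (the content of the proof of Lemma~\ref{lem:invo}) is the creative core of the argument; your proposal correctly identifies where the difficulty lies but does not supply it, so as written the proof is incomplete.
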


For instance, when $n=2$ identity~\eqref{eq:Cat-Tan} reads 
$$
1\cdot2^4C_0-60\cdot2^2C_1+120\cdot2^0C_2=16.
$$
Although Catalan numbers and tangent numbers have been extensively studied in combinatorics
(see~\cite[Exercise~6.19]{St} and~\cite{St10}), there seems to be no combinatorial identity relating
them other than~\eqref{eq:Cat-Tan}. As observed by Aliniaeifard and Li~\cite{AL},
combining~\cite[Corollary~11.5]{AL} and
Theorem~\ref{thm:Cat-Tan} results in the following intriguing identity for tangent numbers 
\begin{equation}\label{eq:Genocchi}
\sum_{k=0}^{n-1}(-1)^{k}{2n\choose 2k+1}2^{2n-2k}T_{2k+1}=2^{2n+1}.
\end{equation}
A generating function proof of Theorem~\ref{thm:Cat-Tan} and a combinatorial proof of~\eqref{eq:Genocchi},
together with its surprising arithmetic applications, were given in~\cite{ZLZ}.
However, a combinatorial proof of Theorem~\ref{thm:Cat-Tan} remains elusive. 

The main objective of this paper is to provide a combinatorial involution proof of
Theorem~\ref{thm:Cat-Tan}.
In the course, we find a new combinatorial identity similar to~\eqref{eq:Genocchi}:
\begin{equation}\label{eq:Tan2}
\sum_{k=0}^{n}(-1)^{k}{2n+1\choose 2k}2^{2n-2k}T_{2k+1}=(-1)^nT_{2n+1}.
\end{equation}
This identity is also reminiscent of the one proved by Andrews and Gessel~\cite{AG}: 
\begin{equation}\label{eq:TanAG}
T_{2n+1}+\sum_{k=1}^{n}(-1)^{k}{2n+1\choose 2k}2^{2k-1}T_{2n-2k+1}=(-1)^n2^{2n}.
\end{equation}

The rest of this paper is organized as follows. In Section~\ref{sec:2}, we introduce the combinatorial
structure of complete binary trees labeled by odd unimodal permutations and then construct
a sign-reversing involution on them in Section~\ref{sec:3} to prove Theorem~\ref{thm:Cat-Tan}
combinatorially. In Section~\ref{sec:4}, we prove two different $q$-analogs of~\eqref{eq:Tan2}
using our involution on labeled binary trees and some other involutions on permutation pairs. 
%%%%%%%%%%%%%%%%%%%%%%%%%%%%%%%%
\section{Binary trees labeled by odd unimodal permutations}%%%
%%%%%%%%%%%%%%%%%%%%%%%%%%%%%%%%
\label{sec:2}
A {\em complete binary tree} is a rooted tree in which every internal node has a left child and
a right child. It is well known (see~\cite[Exercise~6.19]{St}) that $C_n$ enumerates the number
of complete binary trees with $2n+1$ nodes. A {\em complete increasing binary tree} on $[2n+1]$ is
a labeled complete binary tree such that the labels along every path from the root to a leaf
are increasing. Complete increasing binary trees on $[2n+1]$ are known~\cite{St10} to be counted
by $T_{2n+1}$. In order to interpret the left-hand side of~\eqref{eq:Cat-Tan}, we need to
generalize complete increasing binary trees to some complete binary trees labeled by odd
unimodal permutations which are defined below. 

A word $w=w_1w_2\cdots w_k$ with distinct letters is called a {\em unimodal permutation} if 
$$
w_1<w_2<\cdots<w_{m}>w_{m+1}>\cdots >w_k
$$ 
for some $1\leq m\leq k$. If $k$ is odd, then $w$ is said to be an {\em odd unimodal permutation}. 
We introduce the following generalization of complete increasing binary trees. 
\begin{definition}[Labeled binary trees] 
A \blue{{\bf\em labeled (complete) binary tree on $[2n+1]$}} is a complete binary tree whose nodes are
labeled by odd unimodal permutations, and the letters of all labels form a set partition of $[2n+1]$.
See Fig.~\ref{ex:lbt} for three labeled binary trees on $[9]$. 
\end{definition}

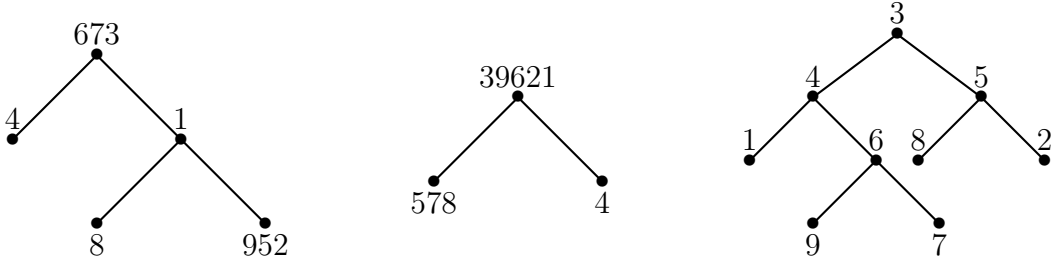
\begin{figure}
\centering
\begin{tikzpicture}[scale=0.28]
\draw[-,thick](10,10) to (6,6);
\draw[-,thick](10,10) to (14,6);
\draw[-,thick](14,6) to (10,2);
\draw[-,thick](14,6) to (18,2);
\node at (10,10){$\bullet$};
\node at (10,11){$673$};
\node at (6,6){$\bullet$};
\node at (6,7){$4$};
\node at (14,6){$\bullet$};\node at (14,7){$1$};
\node at (10,2){$\bullet$};\node at (10,1){$8$};
\node at (18,2){$\bullet$};\node at (18,1){$952$};
\draw[-,thick](30,8) to (26,4);\draw[-,thick](30,8) to (34,4);
\node at (30,8){$\bullet$};\node at (30,9){$39621$};
\node at (26,4){$\bullet$};\node at (26,3){$578$};
\node at (34,4){$\bullet$};\node at (34,3){$4$};
\draw[-,thick](48,11) to (44,8);\node at (48,11){$\bullet$};\node at (48,12){$3$};
\node at (44,8){$\bullet$};\node at (44,9){$4$};
\draw[-,thick](44,8) to (41,5);
\node at (41,5){$\bullet$};\node at (41,6){$1$};
\draw[-,thick](44,8) to (47,5);
\node at (47,5){$\bullet$};\node at (47,6){$6$};
\draw[-,thick](47,5) to (44,2);
\node at (44,2){$\bullet$};\node at (44,1){$9$};
\draw[-,thick](47,5) to (50,2);
\node at (50,2){$\bullet$};\node at (50,1){$7$};
\draw[-,thick](48,11) to (52,8);
\node at (52,8){$\bullet$};\node at (52,9){$5$};
\draw[-,thick](52,8) to (49,5);
\node at (49,5){$\bullet$};\node at (49,6){$8$};
\draw[-,thick](52,8) to (55,5);
\node at (55,5){$\bullet$};\node at (55,6){$2$};
\end{tikzpicture}
\caption{Three labeled binary trees on $[9]$.\label{ex:lbt}}
\end{figure}

Let $\LB_{2n+1}$ be the set of all labeled binary trees on $[2n+1]$ and let $\IB_{2n+1}$ be the set of
all complete increasing binary trees on $[2n+1]$. It is clear that $\IB_{2n+1}\subseteq\LB_{2n+1}$.
For a labeled binary tree $T\in\LB_{2n+1}$, let $\h(T)$ be half the number of edges of $T$.
The following lemma shows that the left-hand side of~\eqref{eq:Cat-Tan} is
a signed counting of labeled binary trees on $[2n+1]$. 
\begin{lemma}\label{lem:sign}
For $n\geq0$, we have 
\begin{equation}\label{eq:sign}
\sum_{T\in\LB_{2n+1}}(-1)^{\h(T)}=\sum_{k=0}^n (-1)^kO(2n+1,2k+1)2^{2n-2k}C_k.
\end{equation}
\end{lemma}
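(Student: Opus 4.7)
The plan is to classify $\LB_{2n+1}$ by the number of nodes in the underlying complete binary tree, and then carry out a direct multiplicative count on each class.

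First, I would observe that every complete binary tree has an odd number of nodes, so any $T\in\LB_{2n+1}$ has exactly $2k+1$ nodes for some $0\le k\le n$, and therefore exactly $2k$ edges. Consequently $\h(T)=k$ and $(-1)^{\h(T)}=(-1)^k$, which reduces the lemma to showing that the number of $T\in\LB_{2n+1}$ with exactly $2k+1$ nodes equals $O(2n+1,2k+1)\,2^{2n-2k}C_k$ for each $0\le k\le n$.

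Next, I would decompose the construction of such a $T$ into three independent choices: (i) the shape of the underlying complete binary tree with $2k+1$ nodes, which contributes a factor of $C_k$; (ii) an assignment of pairwise disjoint odd-size subsets of $[2n+1]$ (forming a set partition) to the $2k+1$ nodes, which contributes $O(2n+1,2k+1)$ because the nodes of a complete binary tree are intrinsically distinguishable and so such data is exactly an odd set composition with $2k+1$ blocks; and (iii) for each of the $2k+1$ blocks, a choice of odd unimodal permutation on its elements. For step (iii) I would use the standard fact that a set of size $m$ admits $2^{m-1}$ unimodal permutations — the maximum must sit at the peak, and each remaining element independently joins the ascending or the descending side — so the number of choices in (iii) is $\prod_i 2^{s_i-1}=2^{(2n+1)-(2k+1)}=2^{2n-2k}$, independently of the specific block sizes $(s_1,\dots,s_{2k+1})$.

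Multiplying the three factors, weighting by $(-1)^k$, and summing over $k$ then yields \eqref{eq:sign}. There is no real obstacle here; the only point worth flagging is the ordering in (ii) — the nodes of a complete binary tree carry an intrinsic address (the sequence of left/right steps from the root), so that assigning blocks to nodes genuinely produces an \emph{ordered} tuple of blocks, which is why the set-composition count $O(2n+1,2k+1)$ (rather than an unordered partition count) naturally appears.
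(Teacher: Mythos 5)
Your proof is correct and follows essentially the same route as the paper: classify trees by the number of nodes $2k+1$, so that $(-1)^{\h(T)}=(-1)^k$, and count each class as a product of $C_k$ tree shapes, $O(2n+1,2k+1)$ odd set compositions assigned to the (distinguishable) nodes, and $2^{s_i-1}$ unimodal orderings per block, giving $2^{2n-2k}$ in total. The only cosmetic difference is that the paper fixes the node ordering via the in-order traversal, whereas you invoke the intrinsic addresses of the nodes; both serve the same purpose.
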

\begin{proof}
Each labeled binary tree in $\LB_{2n+1}$ with $2k$ edges can be constructed by 
\begin{itemize}
\item choosing a complete binary tree $T$ with $2k+1$ nodes,
\item choosing a set composition $\phi$ of $[2n+1]$ with $2k+1$ blocks,
\item forming an odd unimodal permutation using letters in $\phi_i$ for the label of the $i$-th node
(under the in-order traversal) of $T$ for each $1\leq i\leq 2k+1$.
\end{itemize}
There are $C_k$ choices for $T$, $O(2n+1,2k+1)$ choices for $\phi$, and $2^{2n+1-(2k+1)}$ ways to form
the labels of $T$ from $\phi$, whence~\eqref{eq:sign} follows.
\end{proof}

In view of Lemma~\ref{lem:sign}, it remains to construct a sign-reversing involution on labeled
binary trees fixing complete increasing binary trees to finish our combinatorial proof of
Theorem~\ref{thm:Cat-Tan}. 

%%%%%%%%%%%%%%%%%%%%%%%%%%%%%%%%%%%%%%%%%%%%%%%%
\section{A sign-reversing involution on labeled binary trees for Theorem~\ref{thm:Cat-Tan}}
\label{sec:3}
%%%%%%%%%%%%%%%%%%%%%%%%%%%%%%%%%%%%%%%%%%%%%%%%
A crucial result of this paper is the following lemma. 
\begin{lemma}\label{lem:invo}
There exists an involution $\kappa$ on $\LB_{2n+1}$ such that 
\begin{itemize}
\item $\kappa(T)=T$ if and only if $T\in\IB_{2n+1}$;
\item and $(-1)^{\h(T)}+(-1)^{\h(\kappa(T))}=0$ for any $T\in\LB_{2n+1}\setminus\IB_{2n+1}$.
\end{itemize}
\end{lemma}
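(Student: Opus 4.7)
\emph{Proof plan.} The plan is to construct $\kappa$ by identifying, for each non-fixed $T$, a canonical ``active node'' and performing a local modification there that flips the parity of $\h(T)$. First I would note that a tree $T\in\LB_{2n+1}$ lies in $\IB_{2n+1}$ precisely when every node carries a singleton label and the labels strictly increase along each root-to-leaf path, so any non-fixed tree either has a node whose label has length $\geq 3$, or has a singleton-labeled node whose label is not less than one of its children's. These two types of defect will be resolved by two mutually inverse local operations.

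The \emph{split} at a node $v$ with label $u_1 u_2 \cdots u_{2j+1}$ ($j\geq 1$) peels off the endpoint letters $u_1$ and $u_{2j+1}$---the minima on either side of the peak of $v$'s label---leaving $v$ with the shorter odd unimodal label $u_2 \cdots u_{2j}$ and introducing two new singleton-labeled leaves carrying $u_1$ and $u_{2j+1}$. When $v$ is already a leaf, the two new leaves simply become its left and right children; when $v$ is internal, a prescribed rearrangement inserts the new leaves alongside $v$'s original subtrees into a specific, uniquely recognizable shape. The \emph{merge} is the reverse: collapse this shape back to the original $v$ with its longer label. The involution $\kappa$ is then defined by scanning $T$ in a canonical order (such as a suitable pre- or post-order) and applying the first available split or merge at the node so identified as active.

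To verify the lemma I would then check, in order: (i) split and merge are mutually inverse at a given active site; (ii) the canonical active site of $\kappa(T)$ coincides with that of $T$, so that $\kappa\circ\kappa=\mathrm{id}$; (iii) each operation changes the number of edges by exactly two, so $(-1)^{\h(T)}$ and $(-1)^{\h(\kappa(T))}$ have opposite signs; and (iv) a tree admits no applicable split or merge exactly when it lies in $\IB_{2n+1}$. The main obstacle---and the technical heart of the argument---is the precise specification of the split on an \emph{internal} node with long label, together with the matching merge: the two new leaves and $v$'s original subtrees must combine into a labeled binary tree that is both valid and uniquely recognizable, so that the reverse collapse applies and the canonical active site is preserved by $\kappa$. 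Handling the case analysis on the root label's length and on the shapes of $v$'s subtrees, and checking that it meshes with the chosen traversal order, is where the bulk of the work will lie.
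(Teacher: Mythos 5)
Your plan correctly identifies the two kinds of ``defect'' in a non-increasing labeled tree and your split/merge at a \emph{leaf} is exactly the easy half of the construction (the paper's cases (I) and (II): a leaf labeled $\pi_1\cdots\pi_{2k+1}$ becomes a cherry with root $\pi_2\cdots\pi_{2k}$ and children $\pi_1,\pi_{2k+1}$, and conversely). But the proof has a genuine gap precisely where you locate ``the technical heart'': the operation at an active \emph{internal} node is never constructed, only asserted to exist as ``a prescribed rearrangement \ldots into a specific, uniquely recognizable shape.'' This is not a routine detail. If $v$ is internal with nontrivial subtrees $L,R$ and you peel off both endpoint letters as two new singleton leaves, you must attach four subtrees below a binary node, and there is no canonical, self-inverse way to do this without substantial extra bookkeeping. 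Worse, your scheme has \emph{no operation at all} in a case that genuinely occurs: an active internal node $v$ whose label is a singleton and whose children are not both leaves (e.g.\ both branches are increasing but the child's label exceeds $v$'s). There is nothing to split (the label has length $1$), and your merge only applies when the two children are the peeled-off singleton leaves. Such trees would be spurious fixed points, breaking the first bullet of the lemma.

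The paper resolves this with an asymmetric construction quite different from your symmetric peel: at an active internal node it compares a ``left indicator'' and a ``right indicator'' (built from $\pi_1,\pi_2$, the right chain of the left subtree, and their mirror images) to decide \emph{which} end of the label to operate on, and then transfers exactly two letters between the front (or back) of $\pi$ and a node located deep on that chain --- not as immediate children of $v$. The involutive property then requires a separate argument that the inequality $\ind_l<\ind_r$ is preserved by the operation, which is exactly your unverified step (ii). Since the lemma's entire content is the existence of a construction meeting these constraints, and your proposal defers that construction, the argument as written does not establish the lemma.
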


Combining Lemma~\ref{lem:sign} with Lemma~\ref{lem:invo}, we can obtain a combinatorial proof of
Theorem~\ref{thm:Cat-Tan}. The rest of this section is devoted to the construction of $\kappa$. 

Let $T\in\LB_{2n+1}$ be a labeled binary tree. For a node $v$ of $T$, let $T_v$ be the subtree
of $T$ rooted at $v$. We say that $v$ is {\em active}, if $T_v$ is not an increasing subtree and
either $v$ is a leaf\footnote{This happens when the label of $v$ is an odd unimodal
permutation of size greater than one.} or both branches of $T_v$ are increasing subtrees.
For the three labeled binary trees in Fig.~\ref{ex:lbt}, the only active node in the first tree is labeled
$952$, the only active node in the second tree is labeled $578$, while the third tree contains two active
nodes which are labeled $4$ and $5$. Given a binary tree, its {\em right chain} (resp., {\em left chain})
is any maximal path composed of only right (resp., left) edges. 
\smallskip

{\bf The construction of $\kappa$.} Given $T\in\LB_{2n+1}$, if $T\in\IB_{2n+1}$, then define $\kappa(T)=T$.
Otherwise, find the first active node $v$ under the in-order traversal. Suppose that the label of $v$ is
the odd unimodal permutation $\pi=\pi_1\pi_2\cdots\pi_{2k+1}$. We define $\kappa(T)$ to be the tree
obtained from $T$ by replacing $T_v$ by $T_v^*$, where $T_v^*$ is constructed according to
the following cases. 
\begin{itemize}
\item[(I)] If $v$ is a leaf, then $k\geq1$. Let $T_v^*$ be the tree with root $\pi_2\cdots\pi_{2k}$
whose left child is $\pi_1$ and right child is $\pi_{2k+1}$. See Fig.~\ref{case:I-II} for an example. 
\item[(II)] If $v$ has left (resp.,~right) child labeled by $a$ (resp.,~$b$) and both $a,b$ are leaves
such that $a\pi b$ is unimodal, then let $T_v^*$ be the tree with a unique node labeled by $a\pi b$.
Note that this case is the inverse of case (I); see Fig.~\ref{case:I-II}. 
%%%%%%%%%%%%%%%%%%%%%%%%%%
\begin{figure}
\centering
\begin{tikzpicture}[scale=0.28]
\draw[-,thick](30,8) to (26,4);\draw[-,thick](30,8) to (34,4);
\node at (30,8){$\bullet$};\node at (30,9.3){$57642$};
\node at (26,4){$\bullet$};\node at (26,2.7){$3$};
\node at (34,4){$\bullet$};\node at (34,2.7){$1$};
\node at (39,7){$\longrightarrow$};\node at (39,8.1){\small{(II)}};
\node at (39,6){$\longleftarrow$};\node at (39,5){\small{(I)}};
\node at (46.5,6.5){$\bullet$};\node at (46.5,7.8){$3576421$};
\end{tikzpicture}
\caption{Examples for cases (I) and (II).\label{case:I-II}}
\end{figure}
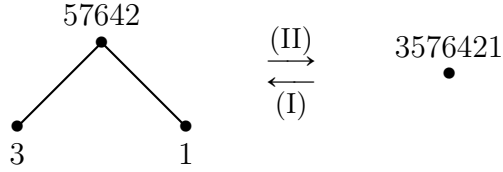
\item[(III)] Cases other than (I) and (II). Let $a_0$ be the label of the left child of $v$, and
$b_0$ the label of the right child of $v$. Find the right (resp.,~left) chain starting from $a_0$
(resp.,~$b_0$), whose labels are $a_0,a_1,\ldots,a_{s}$ (resp.,~$b_0,b_1,\ldots,b_{t}$)
from top to bottom. Introduce the {\em left indicator} $\ind_l$ as:
$$
\ind_l=
\begin{cases}
+\infty,&\text{if $k=0$ and $s=0$;}\\ 
a_0,&\text{if $k=0$ and $s>0$;}\\
\min\{a_0,\pi_1\},&\text{if $k>0$ and ($a_s>\pi_1$ and $\pi_1<\pi_2$);}\\
+\infty,&\text{if $k>0$ and $s=0$ and ($a_0<\pi_1$ or $\pi_1>\pi_2$);}\\
a_0,&\text{if $k>0$ and $s>0$ and ($a_s<\pi_1$ or $\pi_1>\pi_2$).}
\end{cases}
$$
Symmetrically, we define the {\em right indicator} $\ind_r$ as:
$$
\ind_r=
\begin{cases}
+\infty,&\text{if $k=0$ and $t=0$;}\\ 
b_0,&\text{if $k=0$ and $t>0$;}\\
\min\{b_0,\pi_{2k+1}\},&\text{if $k>0$ and ($b_t>\pi_{2k+1}$ and $\pi_{2k+1}<\pi_{2k}$);}\\
+\infty,&\text{if $k>0$ and $t=0$ and ($b_0<\pi_{2k+1}$ or $\pi_{2k+1}>\pi_{2k}$);}\\
b_0,&\text{if $k>0$ and $t>0$ and ($b_t<\pi_{2k+1}$ or $\pi_{2k+1}>\pi_{2k}$).}
\end{cases}
$$
Note that $\ind_l=\ind_r=+\infty$ could not happen.\footnote{Refer to Case (II).} We need to distinguish the following two cases.
\begin{itemize}
\item[(a)] $\ind_l<\ind_r$. We further distinguish two subcases.
\begin{itemize}
\item[(a-1)] If $k>0$ and ($a_s>\pi_1$ and $\pi_1<\pi_2$), then find the smallest index $i$,
$0\leq i\leq s$, such that $a_i>\pi_1$. Suppose that $u$ is the node labeled by $a_{i}$.
In this case, let $T_v^*$ be obtained from $T_v$ by first changing $v$'s label to
$\pi_3\pi_4\cdots\pi_{2k+1}$
and then replacing the subtree $T_{u}$ with the new subtree $\widetilde{T_u}$,
where $\widetilde{T_u}$ is the rooted binary tree with root labeled by $\pi_1$ and the left branch
of the root is $T_u$, while the right branch of the root is the leaf labeled by $\pi_2$.
See Fig.~\ref{case:a12} for an example. 
%%%%%%%%%%%%%%%%%%%%%%%%%%
\begin{figure}
\centering
\begin{tikzpicture}[scale=0.28]
\draw[-,thick](6,9) to (3,6);
\draw[-,thick](6,9) to (9,6);
\node at (6,9){$\bullet$};\node at (6,10){$\red{35}2$};
\node at (9,6){$\bullet$};\node at (9,7){$8$};
\node at (3,6){$\bullet$};\node at (3,7){$1$};
\draw[-,thick](3,6) to (0,3);\draw[-,thick](3,6) to (6,3);
\node at (0,3){$\bullet$};\node at (0,4){$9$};
\blue{\node at (6,3){$\bullet$};\node at (6,4){$4$};
\draw[-,thick](6,3) to (3,0);\draw[-,thick](6,3) to (9,0);
\node at (3,0){$\bullet$};\node at (3,-1){$7$};
\node at (9,0){$\bullet$};\node at (9,-1){$6$};}
%%%%%%%%%%%%%%%%%%%
\node at (14.5,6){$\longrightarrow$};\node at (14.5,7.2){(a-1)};
\node at (14.5,5){$\longleftarrow$};\node at (14.5,4){(a-2)};
%%%%%%%%%%%%%%%%%%%
\draw[-,thick](26,10) to (23,7);
\draw[-,thick](26,10) to (29,7);
\node at (26,10){$\bullet$};\node at (26,11){$2$};
\node at (29,7){$\bullet$};\node at (29,8){$8$};
\node at (23,7){$\bullet$};\node at (23,8){$1$};
\draw[-,thick](23,7) to (20,4);\draw[-,thick](23,7) to (26,4);
\node at (20,4){$\bullet$};\node at (20,5){$9$};
\red{\node at (26,4){$\bullet$};\node at (26,5){$3$};
\draw[-,thick](26,4) to (23,1);\draw[-,thick](26,4) to (29,1);
\node at (29,1){$\bullet$};\node at (29,2){$5$};}
\blue{\node at (23,1){$\bullet$};\node at (23,2){$4$};
\draw[-,thick](23,1) to (20,-2);\draw[-,thick](23,1) to (26,-2);
\node at (20,-2){$\bullet$};\node at (20,-3){$7$};
\node at (26,-2){$\bullet$};\node at (26,-3){$6$};}
\end{tikzpicture}
\caption{Examples for subcases (a-1) and (a-2).\label{case:a12}}
\end{figure}
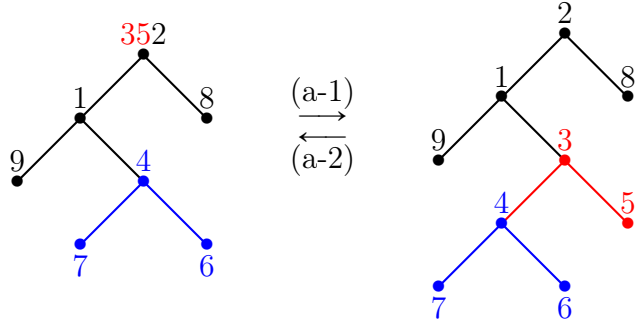
%%%%%%%%%%%%
\item[(a-2)] Otherwise, suppose that $u$ is the node labeled by $a_{s-1}$. In this case,
let $T_v^*$ be obtained from $T_v$ by first changing $v$'s label to $a_{s-1}a_s\pi$ and then
replacing the subtree $T_{u}$ with the subtree $T_{u'}$, where $u'$ is the left child of $u$.
Note that this case is the inverse of case (a-1); see Fig.~\ref{case:a12}. 
\end{itemize}
\item[(b)] $\ind_l>\ind_r$. This case is symmetric to case (a) and we further consider two subcases.
\begin{itemize}
\item[(b-1)] If $k>0$ and ($b_t>\pi_{2k+1}$ and $\pi_{2k+1}<\pi_{2k}$), then find the smallest index
$j$, $0\leq j\leq t$, such that $b_j>\pi_{2k+1}$. Suppose that $u$ is the node labeled by $b_{j}$. 
In this case, let $T_v^*$ be obtained from $T_v$ by first changing $v$'s label to
$\pi_1\pi_2\cdots\pi_{2k-1}$ and then replacing the subtree $T_{u}$ with the new subtree
$\widetilde{T_u}$, where $\widetilde{T_u}$ is the rooted binary tree with root labeled
by $\pi_{2k+1}$ and the right branch of the root is $T_u$, while the left branch of the root
is the leaf labeled by $\pi_{2k}$. 
\item[(b-2)] Otherwise, suppose that $u$ is the node labeled by $b_{t-1}$. In this case,
let $T_v^*$ be obtained from $T_v$ by first changing $v$'s label to $\pi b_{t}b_{t-1}$ and
then replacing the subtree $T_{u}$ with the subtree $T_{u'}$, where $u'$ is the right child
of $u$. Note that this case is the inverse of case (b-1). 
\end{itemize}
\end{itemize}
\end{itemize}
\vskip0.1in
Now we are ready for the proof of Lemma~\ref{lem:invo}.

\begin{proof}[{\bf Proof of Lemma~\ref{lem:invo}}] It is clear from the construction that
$\kappa$ is sign-reversing and fixing all complete increasing binary trees.
Given $T\in\LB_{2n+1}\setminus\IB_{2n+1}$, it is plain to see that $\kappa^2(T)=T$ when
$v$ is in cases (I) or (II). For $v$ in case (III), to see that $\kappa$ is an involution,
all we need to show is that $\ind_l(T_v)<\ind_r(T_v)$ implies $\ind_l(T_v^*)<\ind_r(T_v^*)$
due to the symmetry of labeled binary trees. This is obviously true when the label of $v$
is an odd unimodal permutation of size at least five.
Suppose that the label of $v$ is of size $1$ or $3$ and $m$ is the smallest letter in all
the labels of $T_v$. Then $m=\min\{\pi_1,a_0,\pi_{2k+1},b_0\}$.
We consider the following two cases.
\begin{itemize}
\item $v$ is in case (a-1):
\item[] The label of $v$ in $T_v$ is $\pi=\pi_1\pi_2\pi_3$.
If $m=\min\{\pi_1,a_0\}$ then $\ind_l(T_v^*)=m<\ind_r(T_v^*)$; otherwise, i.e.,
if $m=\min\{\pi_{3},b_0\}$ then either $m=b_0$ or $m=\pi_{3}$, but the latter implies
$\ind_r(T_v)=m$, which is a contradiction,
Hence we have $m=b_0$, and so $\ind_r(T_v)=+\infty$.
This implies that the node labeled by $b_0$ is a leaf in $T_v$,
which forces $\ind_r(T_v^*)=+\infty$. This proves that $\ind_l(T_v^*)<\ind_r(T_v^*)$. 
\item $v$ is in case (a-2):
\item[] If the label of $v$ in $T_v$ is $\pi=\pi_1\pi_2\pi_3$, then we have 
$\ind_l(T_v^*)=\ind_l(T_v)<\ind_r(T_v)=\ind_r(T_v^*)$. If the label of $v$ in $T_v$ is $\pi=\pi_1$,
then $m=\min\{\pi_1,a_0\}=a_0$ since $m=\pi_1$ implies that $T_v$ is an increasing binary tree,
which is a contradiction. Now $a_0=m$ implies that $\ind_l(T_v^*)=m<\ind_r(T_v^*)$. 
\end{itemize}
We have proved that $\ind_l(T_v)<\ind_r(T_v)$ implies $\ind_l(T_v^*)<\ind_r(T_v^*)$ in all cases.
Thus, $\kappa$ is an involution, which completes the proof of the lemma.
\end{proof}

%%%%%%%%%%%%%%%%%%%%%%%
\section{Two $q$-analogs of~\eqref{eq:Tan2}}
%%%%%%%%%%%%%%%%%%%%%%%
\label{sec:4}
In this section, we derive two $q$-analogs of~\eqref{eq:Tan2} from the
combinatorial perspective.

Before proceeding, we introduce some necessary definitions and notations on $q$-series.
Given a nonnegative integer 
$n$, the {\em$q$-shifted factorial} is defined by
$(t;q)_n :=\prod_{i=0}^{n-1}(1-tq^i)$. In particular, $$(-q;q)_n=(1+q)(1+q^2)\cdots(1+q^n)
$$ is a natural $q$-analog of $2^n$. Recall that the $q$-binomial coefficients $\left[{n\atop k}\right]$ are defined by
$$
{n\brack k} :=\frac{(q;q)_n }{(q;q)_{n-k}(q;q)_k}\qquad \textrm{for}\quad 0\leq k\leq n.
$$
 The {\em$q$-sine} $\sin_q(x)$ and {\em$q$-cosine} $\cos_q(x)$ are defined by
$$
\sin_q(x):=\sum_{n\geq0}(-1)^n\frac{x^{2n+1}}{(q;q)_{2n+1}} \quad\text{and}\quad\cos_q(x):=\sum_{n\geq0}(-1)^n\frac{x^{2n}}{(q;q)_{2n}}.
$$
The classical {\em$q$-tangent numbers} (see~\cite{AG,Fo,FH10}) $T_{2n+1}(q)$ then occur in the expansion of $\tan_q(x)$:
$$
\tan_q(x):=\frac{\sin_q(x)}{\cos_q(x)}=\sum_{n\geq0}T_{2n+1}(q)\frac{x^{2n+1}}{(q;q)_{2n+1}}.
$$
For convenience sake, we list the first few values of $T_{2n+1}(q)$: 
\begin{align*}
 T_1(q) &= 1 ,\\
 T_3(q) & = (1 + q)q ,\\
 T_5(q) & = (1 + q)^2 (1 + q^2)^2 q^2 ,\\
 T_7(q) &= (1 + q)^2 (1 + q^2) (1 + q^3)q^3 (1 + q + 3q^2 + 2q^3 + 3q^4 + 2q^5 + 3q^6 + q^7 + q^8).
 \end{align*}

A permutation $\pi=\pi_1\pi_2\cdots\pi_n$ of $[n]$ is {\em down-up} (or {\em alternating}) if
$$
\pi_1>\pi_2<\pi_3>\pi_4<\cdots.
$$
If the above inequalities are reversed, then $\pi$ is said to be {\em up-down}. 
Let $\A_n$ be the set of all down-up permutations of $[n]$. 
A result attributed to Andr\'e~\cite{An} asserts that
\begin{equation}\label{eul:egf}
1+\sum_{n\geq1}|\A_n|\frac{x^n}{n!}=\sec(x)+\tan(x). 
\end{equation}
Thus, $T_{2n+1}=|\A_{2n+1}|$. 
It is a classical result (see~\cite{St10}) that $T_{2n+1}(q)$ has the combinatorial interpretation
\begin{equation}\label{int:qtan}
T_{2n+1}(q)=\sum_{\pi\in\A_{2n+1}}q^{\inv(\pi)},
\end{equation}
where $\inv(\pi):=|\{(i,j)\in[n]^2: i<j,\pi_i>\pi_j\}|$ is the {\em inversion number} of a permutation $\pi$.
The following is our first $q$-analog of~\eqref{eq:Tan2}.
\begin{theorem}\label{thm:q-analog1}
For $n\geq0$, we have 
\begin{equation}\label{q1:Tan2}
\sum_{k=0}^{n}(-1)^{k}{2n+1\brack 2k}(-q;q)_{2n-2k}\widetilde T_{2k+1}(q)=(-1)^nT_{2n+1}(q),
\end{equation}
where $\widetilde T_{1}(q)=1$ and
\begin{equation}
\widetilde T_{2k+1}(q)=\sum_{i=0}^{k-1} {2k\brack 2i+1} T_{2i + 1}(q) T_{2k - 2i - 1}(q) \quad (k \geq 1).
\end{equation}
\end{theorem}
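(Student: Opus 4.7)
My plan is to prove Theorem~\ref{thm:q-analog1} combinatorially via a sign-reversing, weight-preserving involution on structured permutation tuples, in the spirit of the labeled-tree involution $\kappa$ of Section~\ref{sec:3}.

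First I would establish a combinatorial interpretation of $\widetilde T_{2k+1}(q)$. Using $T_{2i+1}(q)=\sum_{\alpha\in\A_{2i+1}}q^{\inv(\alpha)}$, the invariance of $\inv$ under order-preserving relabeling, and the shuffle identity ${2k\brack 2i+1}=\sum_A q^{\inv(w_A)}$, where $w_A$ denotes the binary indicator word of $A\subseteq[2k]$, combined with the concatenation identity $\inv(\beta\alpha)=\inv(\alpha)+\inv(\beta)+\inv(w_A)$ (with $A=\mathrm{supp}(\alpha)$), one arrives at
$$
\widetilde T_{2k+1}(q)=\sum_{(\alpha,\beta)}q^{\inv(\beta\alpha)},
$$
where the sum ranges over ordered pairs $(\alpha,\beta)$ of down-up permutations on complementary odd-sized subsets of $[2k]$. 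Combining this with the combinatorial expansions ${2n+1\brack 2k}=\sum_{S\subseteq[2n+1],\,|S|=2k}q^{\inv(w_S)}$ and $(-q;q)_{2n-2k}=\sum_{R\subseteq[2n-2k]}q^{\sum_{j\in R}j}$, the left-hand side of the theorem becomes the signed weighted enumeration
$$
\sum_{k=0}^n(-1)^k\sum_{(S,\alpha,\beta,R)}q^{\inv(w_S)+\inv(\beta\alpha)+\sum_{j\in R}j}
$$
over quadruples with $S\subseteq[2n+1]$ of size $2k$, $(\alpha,\beta)$ a pair as above on $S$, and $R\subseteq[2n-2k]$.

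The crux is then to define a sign-reversing, weight-preserving involution $\iota$ on these quadruples whose fixed points account for $(-1)^n T_{2n+1}(q)=(-1)^n\sum_{\pi\in\A_{2n+1}}q^{\inv(\pi)}$. The involution toggles a distinguished element between the pair side $(\alpha,\beta)$ and the auxiliary set $R$, changing $|S|=2k$ by $\pm1$ and flipping the sign $(-1)^k$, with a toggling rule chosen to preserve the triple $q$-weight. Fixed points should be those quadruples in which no toggle is valid, and these will naturally encode full down-up permutations of $[2n+1]$, with the element of $[2n+1]\setminus S$ playing the role of a canonical valley between $\beta$ and $\alpha$.

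The main obstacle is constructing $\iota$ to simultaneously preserve the three heterogeneous $q$-statistics $\inv(w_S)$ on $[2n+1]$, $\inv(\beta\alpha)$ on $S$, and the sum statistic $\sum_{j\in R}j$ on $[2n-2k]$. A promising device is to realize the full tuple weight as the inversion number of a single permutation of $[2n+1]$ by concatenating $\beta\alpha$ with a canonical linearization of $[2n+1]\setminus S$ that simultaneously encodes $R$ and the distinguished missing element; this reduces the task to a single-statistic involution whose rules can be patterned after those used in the construction of $\kappa$, with the appropriate modifications to match the parity constraints imposed by the odd-sized decomposition underlying $\widetilde T_{2k+1}(q)$.
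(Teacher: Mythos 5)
Your proposal is a plan rather than a proof: everything hinges on the sign-reversing, weight-preserving involution $\iota$ on the quadruples $(S,\alpha,\beta,R)$, and you never construct it. You state what $\iota$ should do (toggle a distinguished element, change $k$ by one, flip the sign, preserve the combined weight $\inv(w_S)+\inv(\beta\alpha)+\sum_{j\in R}j$, and leave fixed points encoding $\A_{2n+1}$), and you yourself identify the obstruction --- reconciling three heterogeneous statistics, one of which ($\sum_{j\in R}j$ on $[2n-2k]$) does not live on the alphabet $[2n+1]$ at all --- but the ``promising device'' for resolving it is left as a one-sentence aspiration. Until $\iota$ is written down and its fixed-point set and weight-preservation are verified, there is no proof. (A smaller point: your fixed-point description, with the single element of $[2n+1]\setminus S$ acting as a valley between $\beta$ and $\alpha$, only makes sense at $k=n$, and the claim that the residual weight $\inv(w_S)$ then accounts exactly for the missing inversions of the assembled permutation is asserted, not checked.)

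The paper's proof needs no new involution at all, which is worth internalizing. The left-hand side of \eqref{q1:Tan2} is read off from the decomposition of a labeled binary tree $T\in\LB_{2n+1}$ at its root $(T_g,r,T_d)$: the root's unimodal label contributes $(-q;q)_{2n-2k}$ by Lemma~\ref{lem:unimodal}, the distribution of letters contributes the two $q$-binomial coefficients via \eqref{eq:qmul}, and --- crucially --- each branch, being itself a labeled binary tree on an odd-sized set, contributes a signed sum equal to $\pm T_{2i+1}(q)$ by Lemma~\ref{lem:newint}, which is where the involution $\kappa$ (already built in Section~\ref{sec:3} and shown to preserve $\inv$) does all the work. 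Since $w(T)=w(T_g)\cdot(\text{root label})\cdot w(T_d)$ under in-order traversal, the inversion statistic splits exactly into these pieces and the identity is an exact bookkeeping of $\sum_{T}(-1)^{\h(T)}q^{\inv(T)}$ in two ways. If you want to salvage your route, the first repair is to replace your interpretation of $(-q;q)_{2n-2k}$ by the unimodal-permutation interpretation of Lemma~\ref{lem:unimodal}, so that the whole quadruple becomes a single permutation-like object on $[2n+1]$ with a single inversion statistic; at that point your objects are essentially labeled binary trees of depth one with increasing branches, and you would still need to supply an involution doing the job that $\kappa$ already does.
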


\begin{remark}
Foata~\cite{Fo} proved the following quadratic recursion for tangent numbers:
$$
T_{2n + 1} = \sum_{k=0}^{n - 1} {2n\choose 2k+1} T_{2k + 1} T_{2n - 2k - 1} \quad (n \geq 1),
$$
from which we have $\widetilde T_{2n+1}(1)=T_{2n+1}$. Thus, identity~\eqref{q1:Tan2} is indeed a $q$-analog of~\eqref{eq:Tan2}. 
\end{remark}

We need some preparations before we can prove Theorem~\ref{thm:q-analog1}. 
Let $\U_n$ be the set of all unimodal permutations of $[n]$. For instance,
$\U_3=\{123,132,231,321\}$. The following lemma, which enumerates unimodal permutations
by inversion numbers, was proved in~\cite{ZLZ}.

\begin{lemma}\label{lem:unimodal}
For $n\geq1$, we have
\begin{equation}
\sum_{\pi\in\U_n}q^{\inv(\pi)}=(-q;q)_{n-1}.
\end{equation}
\end{lemma}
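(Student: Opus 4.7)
The plan is to prove the lemma by a short induction on $n$. The base case $n=1$ is immediate: $\U_1=\{1\}$ with $\inv(1)=0$, so both sides equal $1=(-q;q)_0$.

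The key observation driving the inductive step is that in any $\pi\in\U_n$ with $n\geq 2$, the entry $1$ must occupy one of the two extreme positions. Indeed, if $\pi_i=1$ with $1<i<n$, then both $\pi_{i-1}$ and $\pi_{i+1}$ exceed $1$, forcing $1$ to be a valley, which contradicts unimodality. So $\U_n$ splits into two disjoint classes, $\{\pi\in\U_n:\pi_1=1\}$ and $\{\pi\in\U_n:\pi_n=1\}$.

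In the first class, removing $\pi_1=1$ and subtracting $1$ from every remaining entry gives a bijection onto $\U_{n-1}$ preserving $\inv$, since a leading $1$ creates no inversions. In the second class the same deletion-and-standardization is again a bijection onto $\U_{n-1}$, but the trailing $1$ has contributed exactly $n-1$ inversions with all preceding entries, so $\inv(\pi)=\inv(\pi')+(n-1)$ under this bijection, where $\pi'\in\U_{n-1}$ denotes the image. Summing over the two classes yields the recursion
$$
\sum_{\pi\in\U_n}q^{\inv(\pi)}=(1+q^{n-1})\sum_{\pi'\in\U_{n-1}}q^{\inv(\pi')},
$$
and invoking the induction hypothesis together with the identity $(-q;q)_{n-1}=(1+q^{n-1})(-q;q)_{n-2}$ closes the argument.

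This plan involves no genuine obstacle; the only step requiring care is verifying that deleting the extremal $1$ indeed lands in $\U_{n-1}$, which follows at once because chopping off an endpoint of a unimodal sequence leaves a unimodal sequence. An alternative route would be to construct a direct bijection $\U_n\to 2^{[n-1]}$ matching $\inv(\pi)$ with $\sum_{i\in T}i$ (parametrizing $\pi$ by the subset of $[n-1]$ that sits to the left of the forced peak $n$), but the inductive argument above is shorter and avoids the auxiliary calculation of $\inv(\pi)$ in terms of the left-set.
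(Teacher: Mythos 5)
Your proof is correct: the observation that $1$ must sit at an end of a unimodal permutation, the two inversion-preserving (resp.\ inversion-shifting by $n-1$) deletion bijections onto $\U_{n-1}$, and the resulting recursion $\sum_{\pi\in\U_n}q^{\inv(\pi)}=(1+q^{n-1})\sum_{\pi'\in\U_{n-1}}q^{\inv(\pi')}$ all check out. The paper itself gives no proof of this lemma (it is quoted from the reference [ZLZ]), so there is nothing to compare against; your argument is a perfectly adequate self-contained justification, essentially equivalent to the direct bijection $\U_n\to 2^{[n-1]}$ you mention as an alternative.
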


For a labeled tree $T\in\LB_{2n+1}$ with $2k+1$ nodes, consider the word concatenation
$w(T)=\alpha_1\alpha_2\cdots\alpha_{2k+1}$, where $\alpha_i$ is the labeling of the unimodal
permutation of the $i$-th node (under the in-order traversal) of $T$. For instance,
if $T$ is the first tree in Fig.~\ref{ex:lbt}, then $w(T)=467381952$.
It is clear that $w(T)$ is a permutation of $[2n+1]$. The {\em inversion number of $T$},
denoted by $\inv(T)$, is thus defined as 
$$
\inv(T):=\inv(w(T)).
$$
The following interpretation of signed $q$-tangent numbers is a consequence of 
Lemma~\ref{lem:invo}.
\begin{lemma}\label{lem:newint}
For $n\geq0$, we have
\begin{equation}\label{new:tan}
\sum_{T\in\LB_{2n+1}}(-1)^{\h(T)}q^{\inv(T)}=(-1)^nT_{2n+1}(q).
\end{equation}
\end{lemma}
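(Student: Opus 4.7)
The plan is to upgrade the involution $\kappa$ of Lemma~\ref{lem:invo} by establishing the stronger property that $\kappa$ preserves the in-order word, namely $w(\kappa(T))=w(T)$ for every $T\in\LB_{2n+1}$. Since $\inv(T):=\inv(w(T))$, this immediately gives $\inv(\kappa(T))=\inv(T)$, so the sign-reversing pairs now cancel in the weighted sum as well. The left-hand side of~\eqref{new:tan} therefore collapses onto the fixed points $\IB_{2n+1}$, each contributing the sign $(-1)^{\h(T)}=(-1)^n$ since a complete binary tree on $2n+1$ nodes has exactly $2n$ edges. Hence
\[
\sum_{T\in\LB_{2n+1}}(-1)^{\h(T)}q^{\inv(T)}=(-1)^n\sum_{T\in\IB_{2n+1}}q^{\inv(T)},
\]
and it remains only to identify the latter sum with $T_{2n+1}(q)$.

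To prove word-preservation I verify each case of the construction of $\kappa$ from Section~\ref{sec:3}. In case (I), the leaf labeled by $\pi_1\pi_2\cdots\pi_{2k+1}$ is replaced by a three-node subtree whose in-order reading is precisely $\pi_1\cdot\pi_2\cdots\pi_{2k}\cdot\pi_{2k+1}$, and case (II) is its inverse. The substantive check is case (III). Here the key structural observation is that, since $u$ lies on the right chain descending from $a_0$, the subtree $T_u$ occupies the very tail of the in-order traversal of the left subtree of $v$: the in-order word of $T_{a_0}$ reads $w(L_{a_0})\cdot a_0\cdot w(L_{a_1})\cdot a_1\cdots w(L_{a_{i-1}})\cdot a_{i-1}\cdot w(T_u)$, with no letters after $w(T_u)$. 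Consequently $w(T_v)$ factors as $(\text{prefix})\cdot w(T_u)\cdot \pi_1\pi_2\pi_3\cdots\pi_{2k+1}\cdot w(T_{b_0})$, and in subcase (a-1) $\kappa$ strips the prefix $\pi_1\pi_2$ from $v$'s label and reinserts it immediately after $w(T_u)$ via the gadget $\widetilde{T_u}$, leaving the concatenated word unchanged. Subcase (a-2) is the inverse, and (b-1), (b-2) are the mirror arguments along the left chain from $b_0$. This bookkeeping is the main obstacle: one must carefully confirm that no letters of $w(T_{a_0})$ follow $w(T_u)$, which hinges on $T_u$ being the right subtree of $a_{i-1}$ (or the whole of $T_{a_0}$ when $i=0$). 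Once this is noted, the compensating insertion of $\pi_1\pi_2$ in all subcases is automatic.

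For the final identification I invoke the classical bijection $T\mapsto w(T)$ from $\IB_{2n+1}$ onto $\A_{2n+1}$. An easy induction shows that for a complete increasing binary tree with root $r$ and nonempty subtrees $L,R$, the concatenation $w(L)\cdot r\cdot w(R)$ is down-up, since $r$ is the minimum of the tree and $w(L),w(R)$ are themselves down-up by induction; conversely, a down-up permutation of $[2n+1]$ is split at its minimum, which necessarily sits at an even position, yielding the recursive reconstruction. Since $\inv(T)$ equals $\inv(w(T))$ by definition, this bijection is inversion-preserving, and combined with~\eqref{int:qtan} we obtain $\sum_{T\in\IB_{2n+1}}q^{\inv(T)}=\sum_{\pi\in\A_{2n+1}}q^{\inv(\pi)}=T_{2n+1}(q)$, completing the proof.
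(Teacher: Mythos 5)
Your proposal is correct and takes essentially the same route as the paper: the paper simply asserts that $\kappa$ preserves $\inv(T)$ and invokes the standard inversion-preserving bijection $T\mapsto w(T)$ between $\IB_{2n+1}$ and $\A_{2n+1}$ together with~\eqref{int:qtan}. You supply the details the paper leaves as ``plain to check,'' in fact verifying the stronger (and true) statement that $\kappa$ fixes the in-order word $w(T)$ in every case of the construction.
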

\begin{proof}It is plain to check that our involution $\kappa$ has the following feature:
$$
\inv(T)=\inv(\kappa(T)) \quad\text{for any $T\in\LB_{2n+1}$},
$$
that is, it preserves the inversion numbers of labeled trees. The result then follows from Lemma~\ref{lem:invo}, interpretation~\eqref{int:qtan} and the known fact that $T\mapsto w(T)$ establishes a one-to-one correspondence between $\IB_{2n+1}$ and $\A_{2n+1}$ preserving inversion numbers. 
\end{proof}
 We are ready for the proof of Theorem~\ref{thm:q-analog1}.
\begin{proof}[{\bf Proof of Theorem~\ref{thm:q-analog1}}]
Recall the well-known combinatorial interpretation for the $q$-binomial coefficients:
\begin{equation}\label{eq:qmul}
{n\brack k}=\sum_{({\mathcal A}, {\mathcal B})}q^{\inv({\mathcal A}, {\mathcal B})},
\end{equation}
summed over all set compositions $({\mathcal A}, {\mathcal B})$ of $[n]$ such that $|{\mathcal A}|=k$, and
$$\inv({\mathcal A}, {\mathcal B}):=|\{(a,b)\in{\mathcal A}\times{\mathcal B}: a>b\}|.$$
A labeled binary tree $T\in\LB_{2n+1}$ can be decomposed as $(T_g,r,T_d)$, where $r$ is the root and $T_g$ and $T_d$ (possibly empty) are left branch and right branch of $r$, respectively. It follows from this decomposition, Lemma~\ref{lem:unimodal}, the interpretation~\eqref{new:tan} of $q$-tangent numbers and the interpretation~\eqref{eq:qmul} of $q$-binomial coefficients that
$$
(-1)^nT_{2n+1}(q)=(-q;q)_{2n}+\sum_{k=1}^{n}(-1)^{k}{2n+1\brack 2k}(-q;q)_{2n-2k}\sum_{i=0}^{k-1}{2k\brack 2i+1}T_{2i+1}(q)T_{2k-2i-1}(q),
$$
where $(-q;q)_{2n}$ counts single-node labeled binary trees on $[2n+1]$ by inversion numbers. 
This completes the proof of the theorem. 
\end{proof}

\subsection{Permutation pairs and another $q$-analog of~\eqref{eq:Tan2}}

In view of~\eqref{eul:egf}, the number $|\A_{2n}|:=S_{2n}$ is known as a {\em secant number}. 
Consider the {\em$q$-secant number }
$$
S_{2n}(q):=\sum_{\pi\in\A'_{2n}}q^{\inv(\pi)},
$$
where $\A'_n$ denotes the set of all up-down permutations of $[n]$. For convenience, set $S_0(q)=1$. It was known (see~\cite{FH10}) that 
$$
\sum_{n\geq0}S_{2n}(q)\frac{x^{2n}}{(q;q)_{2n}}=\frac{1}{\cos_q(x)}. 
 $$
 The following is our second $q$-analog of~\eqref{eq:Tan2}.
\begin{theorem}\label{thm:q-analog2}
For $n\geq0$, we have 
\begin{equation}\label{q2:Tan2}
\sum_{k=0}^{n}(-1)^{k}{2n+1\brack 2k}(-q;q)_{2n-2k} T_{2k+1}(q)=(-1)^n\widehat T_{2n+1}(q),
\end{equation}
where 
\begin{equation}
(-1)^n\widehat T_{2n+1}(q)=\sum_{k=0}^{n} (-1)^{k} {2n+1\brack 2k}q^{2k} S_{2k}(q).
\end{equation}
Equivalently, 
$$
\sum_{k=0}^{n}(-1)^{k}{2n+1\brack 2k}\bigl((-q;q)_{2n-2k} T_{2k+1}(q)-q^{2k} S_{2k}(q)\bigr)=0.
$$
\end{theorem}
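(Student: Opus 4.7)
I would prove the equivalent formulation
\[
\sum_{k=0}^{n}(-1)^{k}{2n+1\brack 2k}\bigl((-q;q)_{2n-2k}\,T_{2k+1}(q)-q^{2k}\,S_{2k}(q)\bigr)=0
\]
by first giving a common combinatorial interpretation to the two summands in terms of permutation triples, and then constructing a sign-reversing involution on those triples in the spirit of $\kappa$ from Section~\ref{sec:3}.

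Using~\eqref{eq:qmul} for ${2n+1\brack 2k}$, Lemma~\ref{lem:unimodal} for $(-q;q)_{2n-2k}$, and~\eqref{int:qtan} for $T_{2k+1}(q)$, I would interpret ${2n+1\brack 2k}(-q;q)_{2n-2k}T_{2k+1}(q)$ as the $q$-weighted count of triples $(\mathcal{A},\sigma,\tau)$, where $\mathcal{A}\subseteq[2n+1]$ has $|\mathcal{A}|=2k$, $\sigma$ is a unimodal permutation of $\mathcal{B}:=[2n+1]\setminus\mathcal{A}$, and $\tau$ is a down-up permutation of the extended alphabet $\mathcal{A}\cup\{\infty\}$ (with $\infty$ a formal symbol larger than every letter of $[2n+1]$). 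The weight of such a triple is $q^{\inv(\mathcal{A},\mathcal{B})+\inv(\sigma)+\inv(\tau)}$, and it carries the sign $(-1)^k$. A direct check shows that ${2n+1\brack 2k}q^{2k}S_{2k}(q)$ is the $q$-weighted count of the subfamily where $\sigma$ is the unique increasing arrangement of $\mathcal{B}$ (forcing $\inv(\sigma)=0$) and $\tau_1=\infty$: indeed, such a $\tau$ decomposes as $\infty$ prepended to an up-down permutation of $\mathcal{A}$, and $\infty$ at position $1$ contributes exactly $2k$ inversions to $\inv(\tau)$.

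With these interpretations in hand, the identity reduces to the existence of an involution $\iota$ on the family of all triples $(\mathcal{A},\sigma,\tau)$ (across all $k$) that preserves the joint weight $q^{\inv(\mathcal{A},\mathcal{B})+\inv(\sigma)+\inv(\tau)}$, reverses the sign $(-1)^k$ by changing $|\mathcal{A}|$ by $\pm 2$, and whose fixed-point set is exactly the subfamily just described. Mirroring the ``first active node'' mechanism underlying $\kappa$, I would define $\iota$ by two mutually inverse local moves: if $\sigma$ fails to be increasing, locate the peak of $\sigma$ and move it together with a canonically chosen neighbor from $\mathcal{B}$ into $\mathcal{A}$, inserting them at a prescribed position into $\tau$; if $\sigma$ is increasing but $\tau_1\neq\infty$, extract $\infty$ together with a designated partner from $\tau$ and re-insert them into $\sigma$ to manufacture a new peak in $\mathcal{B}$. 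The surviving fixed points then contribute $\sum_{k}(-1)^k{2n+1\brack 2k}q^{2k}S_{2k}(q)=(-1)^n\widehat{T}_{2n+1}(q)$, yielding the identity.

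\emph{Main obstacle.} The principal challenge is verifying that $\iota$ preserves the joint $q$-weight: transferring two letters between $\mathcal{A}$ and $\mathcal{B}$ simultaneously perturbs all three of $\inv(\mathcal{A},\mathcal{B})$, $\inv(\sigma)$, and $\inv(\tau)$, and the insertion/deletion positions must be engineered so that these three shifts cancel in $q$-power. A careful case analysis (in the style of the one carried out for $\kappa$) will also be required to confirm $\iota^2=\id$ and to handle the degenerate boundary configurations where $\mathcal{B}$ or $\mathcal{A}$ is small.
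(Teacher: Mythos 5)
Your combinatorial setup is essentially the paper's: your triples $(\mathcal{A},\sigma,\tau)$ with $\infty$ adjoined are exactly the permutation pairs $\UP_{2n+2}$ used in the paper (with $2n+2$ playing the role of $\infty$), and your identification of the fixed-point subfamily with $\sum_k(-1)^k{2n+1\brack 2k}q^{2k}S_{2k}(q)$ is correct. The gap is in the involution itself, which is the entire content of the proof and which your sketch does not actually construct. As described, your two moves are not mutually inverse: if $\sigma$ is unimodal but not increasing, removing its peak together with a neighbor need not leave an increasing word (e.g.\ from $15432$ removing $5$ and $4$ leaves $132$), so the image of your first move can land back in the domain of that same move, violating $\iota^2=\id$. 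Worse, the reverse move ``extract $\infty$ together with a partner from $\tau$ and re-insert them into $\sigma$'' is not well defined, since $\sigma$ is a permutation of a subset of $[2n+1]$ and cannot absorb the formal symbol $\infty$; in any correct involution the special letter must stay in the down-up block throughout.

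The paper's involution $g$ avoids both problems and also dissolves your stated ``main obstacle.'' It never relocates letters within the words: it simply shifts the dividing point between the unimodal prefix $\pi$ and the down-up suffix $\sigma$ by two positions (appending $\sigma_1\sigma_2$ to $\pi$, or peeling $\pi_{l-1}\pi_l$ off onto the front of $\sigma$), with the case distinction governed not by whether $\pi$ is increasing but by the boundary comparison $\pi_l\lessgtr\sigma_1$ (together with the exceptional clause singling out $\sigma_1=2n+2$ with $\pi$ increasing as the fixed points). Because the concatenated word $\pi\cdot\sigma$ is literally unchanged, $\inv(\pi,\sigma)=\inv(\pi\cdot\sigma)$ is preserved for free --- there is no three-way cancellation of inversion statistics to engineer. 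If you want to salvage your write-up, replace your peak-based case distinction with this boundary comparison and keep the cross-inversion statistic oriented so that the total weight equals $\inv$ of the concatenation; then weight preservation is automatic and only the (short) verification that the two cases are mutually inverse remains.
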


To see that identity~\eqref{q2:Tan2} is a $q$-analog of ~\eqref{eq:Tan2}, we prove the following identity connecting $q$-tangent numbers with $q$-secant numbers, which seems first appears in the work of Huber and Yee~\cite{Yee}. 

\begin{theorem}\label{thm:q-sec:tan}
For $n\geq0$, we have
\begin{equation}\label{eq:q-sec:tan}
\sum_{k=0}^n(-1)^k{2n+1\brack 2k}S_{2k}(q)=(-1)^nT_{2n+1}(q). 
\end{equation}
\end{theorem}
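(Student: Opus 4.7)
The plan is to derive Theorem~\ref{thm:q-sec:tan} by straightforward coefficient extraction from the algebraic identity $\tan_q(x) = \sin_q(x)\cdot\bigl(1/\cos_q(x)\bigr)$, which is immediate from the definition of $\tan_q$. Since $\sin_q(x)$ is an odd power series with alternating signs and $1/\cos_q(x) = \sum_{k\geq 0}S_{2k}(q)\,x^{2k}/(q;q)_{2k}$ is an even power series with nonnegative coefficients, the product is automatically odd, and its coefficient of $x^{2n+1}$ is a single sum over $k$ coming from the decomposition $2n+1 = (2(n-k)+1) + 2k$. This matches precisely the support of $\tan_q(x)$, so matching coefficients will produce a closed-form relation between $T_{2n+1}(q)$ and the $S_{2k}(q)$.

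First I would write out the Cauchy product of the two series for $\sin_q(x)$ and $1/\cos_q(x)$ and read off the coefficient of $x^{2n+1}$, which takes the form
$$\sum_{k=0}^{n}\frac{(-1)^{n-k}\,S_{2k}(q)}{(q;q)_{2n+1-2k}\,(q;q)_{2k}}.$$
Next I would multiply and divide the inner sum by $(q;q)_{2n+1}$, which converts the rational factor into the $q$-binomial coefficient ${2n+1\brack 2k}$. Finally, comparing with $\tan_q(x) = \sum_{n\geq 0}T_{2n+1}(q)\,x^{2n+1}/(q;q)_{2n+1}$ yields $T_{2n+1}(q) = \sum_{k=0}^{n}(-1)^{n-k}{2n+1\brack 2k}\,S_{2k}(q)$, and multiplying both sides by $(-1)^n$ gives~\eqref{eq:q-sec:tan}.

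There is no genuine obstacle: the identity is essentially forced once one writes $\tan_q = \sin_q\cdot(1/\cos_q)$ and uses the two explicit series already recorded in the excerpt. The only step requiring attention is sign bookkeeping, since the alternating signs arise entirely from $\sin_q(x)$ while $1/\cos_q(x)$ contributes nonnegative coefficients. A combinatorial alternative, more in the spirit of the rest of the paper, would be to construct a sign-reversing involution on pairs $(A,\sigma)$ where $A\subseteq [2n+1]$ has even size $2k$ and $\sigma$ is an up-down permutation of $A$ weighted by $q^{\inv}$, exploiting the subset interpretation~\eqref{eq:qmul} of ${2n+1\brack 2k}$ so that fixed points correspond to down-up permutations of $[2n+1]$; but such a refinement is not needed to establish the identity itself.
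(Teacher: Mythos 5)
Your proof is correct, but it takes a genuinely different route from the paper. You derive the identity by extracting the coefficient of $x^{2n+1}$ from $\tan_q(x)=\sin_q(x)\cdot\bigl(1/\cos_q(x)\bigr)$, using the expansion $1/\cos_q(x)=\sum_{k\geq0}S_{2k}(q)\,x^{2k}/(q;q)_{2k}$ that the paper records (citing Foata--Han); the sign bookkeeping $(-1)^{n-k}\cdot(-1)^n=(-1)^k$ comes out right, and the step converting $(q;q)_{2n+1}/\bigl((q;q)_{2n+1-2k}(q;q)_{2k}\bigr)$ into ${2n+1\brack 2k}$ is exactly the definition of the $q$-binomial coefficient. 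The paper instead gives a bijective proof: it introduces the set $\IP_{2n+1}$ of permutation pairs $(\pi,\sigma)$ with $\pi$ increasing and $\sigma$ up-down of even size, identifies the left-hand side of \eqref{eq:q-sec:tan} as a signed, $\inv$-weighted count of $\IP_{2n+1}$ via \eqref{eq:qmul}, and constructs a sign-reversing, $\inv$-preserving involution $f$ (shuttling two letters between the end of $\pi$ and the front of $\sigma$) whose fixed points are exactly the down-up permutations of $[2n+1]$, contributing $(-1)^nT_{2n+1}(q)$. Your generating-function argument is shorter and entirely mechanical, but it leans on the quoted expansion of $1/\cos_q(x)$ as a black box and yields no combinatorial insight; the paper's involution is self-contained at the level of permutation statistics and, more importantly, is the template reused immediately afterwards to prove Theorem~\ref{thm:q-analog2} and the Huber--Yee identity, which is why the authors take that route. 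The combinatorial alternative you sketch in your closing sentence --- an involution on pairs weighted by $q^{\inv}$ whose fixed points are down-up permutations of $[2n+1]$ --- is essentially the paper's proof; had you carried it out you would have reproduced their argument.
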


The following combinatorial object is crucial in our proofs of the above two theorems. 
\begin{definition}
A \blue{\bf\em permutation pair on $[n]$} is a pair $(\pi,\sigma)$ of words (possibly empty) such that the word concatenation $\pi\cdot\sigma$ forms a permutation of $[n]$. For instance, $(2571,86493)$ and $(\emptyset, 257186493)$ are two different permutation pairs on $[9]$. 
For a permutation pair $(\pi,\sigma)$, introduce the \blue{\bf\em sign} and the \blue{\bf\em inversion number} of $(\pi,\sigma)$ as
$$
\sgn(\pi,\sigma):=(-1)^{\lfloor k/2\rfloor}\quad\text{and}\quad\inv(\pi,\sigma):=\inv(\pi\cdot\sigma),
$$
where $k$ is the size of $\sigma$. 
\end{definition}

First we prove Theorem~\ref{thm:q-sec:tan}. 

\begin{proof}[{\bf Proof of Theorem~\ref{thm:q-sec:tan}}]
Let us consider the set $\IP_{2n+1}$ of all permutation pairs $(\pi,\sigma) $ on $[2n+1]$ such that $\pi$ is increasing and $\sigma$ is up-down of even size. 
%For $(\pi,\sigma)\in\IP_{2n+1}$ with $\sigma$ having size $2k$, define 
%$$
%\sgn(\pi,\sigma):=(-1)^k\quad\text{and}\quad\inv(\pi,\sigma):=\inv(\pi\cdot\sigma). 
%$$
By the interpretation~\eqref{eq:qmul} of $q$-binomial coefficients, we have 
\begin{equation}\label{IP:left}
\sum_{(\pi,\sigma)\in\IP_{2n+1}}\sgn(\pi,\sigma)q^{\inv(\pi,\sigma)}=\sum_{k=0}^n(-1)^k{2n+1\brack 2k}S_{2k}(q). 
\end{equation}

On the other hand, for $(\pi,\sigma)\in\IP_{2n+1}$ with $\pi=\pi_1\pi_2\cdots\pi_{2n+1-2k}$ and $\sigma=\sigma_1\sigma_2\cdots\sigma_{2k}$ for some $k\geq0$, we introduce the mapping 
$$
f(\pi,\sigma)=
\begin{cases}
(\pi\sigma_1\sigma_2,\sigma_3\cdots\sigma_{2k}),
\quad&\text{if $\pi_{2n+1-2k}<\sigma_1$;}\\
(\pi_1\cdots\pi_{2n-1-2k},\pi_{2n-2k}\pi_{2n+1-2k}\sigma),\qquad&\text{if $\pi_{2n+1-2k}>\sigma_1$ and $k<n$;}\\
(\pi,\sigma),\quad&\text{ if $\pi_{2n+1-2k}>\sigma_1$ and $k=n$}.
\end{cases}
$$
Let $\IP^*_{2n+1}$ be the set of permutation pairs in $\IP_{2n+1}$ that satisfy the condition in the third case above. Then, the mapping $f$ is an involution on $\IP_{2n+1}$ preserving inversion numbers, fixing elements in $\IP^*_{2n+1}$ and sign-reversing on $\IP_{2n+1}\setminus\IP^*_{2n+1}$. Thus,
$$
\sum_{(\pi,\sigma)\in\IP_{2n+1}}\sgn(\pi,\sigma)q^{\inv(\pi,\sigma)}=\sum_{(\pi,\sigma)\in\IP^*_{2n+1}}\sgn(\pi,\sigma)q^{\inv(\pi,\sigma)}=(-1)^nT_{2n+1}(q),
$$
which completes the proof in view of~\eqref{IP:left}. 
\end{proof}

In similar spirit, we can also prove Theorem~\ref{thm:q-analog2}. 

\begin{proof}[{\bf Proof of Theorem~\ref{thm:q-analog2}}]
Let us consider the set $\UP_{2n+2}$ of all permutation pairs $(\pi,\sigma) $ on $[2n+2]$ such that
\begin{itemize}
\item $\pi$ is unimodal;
\item $\sigma$ is down-up of odd size and contains the letter $2n+2$. 
 \end{itemize}
% For $(\pi,\sigma)\in\UP_{2n+1}$ with $\sigma$ having size $2k+1$, define 
%$$
%\sgn(\pi,\sigma):=(-1)^k\quad\text{and}\quad\inv(\pi,\sigma):=\inv(\pi\cdot\sigma). 
%$$
By the interpretation~\eqref{eq:qmul} of $q$-binomial coefficients and Lemma~\ref{lem:unimodal}, we have 
\begin{equation}\label{UP:left}
\sum_{(\pi,\sigma)\in\UP_{2n+2}}\sgn(\pi,\sigma)q^{\inv(\pi,\sigma)}=\sum_{k=0}^n(-1)^k{2n+1\brack 2k}(-q;q)_{2n-2k}T_{2k+1}(q). 
\end{equation}

On the other hand, for $(\pi,\sigma)\in\UP_{2n+2}$ with $\pi=\pi_1\pi_2\cdots\pi_{l}$ and $\sigma=\sigma_1\sigma_2\cdots\sigma_{k}$ for some odd positive integer $k$ satisfying $k+l=2n+2$, we introduce the mapping 
$$
g(\pi,\sigma)=
\begin{cases}
(\pi\sigma_1\sigma_2,\sigma_3\cdots\sigma_{k}),
\quad&\text{if $\pi_{l}>\sigma_1$ or $\pi_{l}<\sigma_1\neq 2n+2$ with $\pi$ increasing;}\\
(\pi_1\cdots\pi_{l-2},\pi_{l-1}\pi_{l}\sigma)\quad&\text{if $\pi_l<\sigma_1$ and $\pi_{l-1}>\pi_l$;}\\
(\pi,\sigma),\quad&\text{if $\sigma_1=2n+2$ and $\pi$ is increasing}.
\end{cases}
$$
Let $\UP^*_{2n+2}$ be the set of all permutation pairs in $\UP_{2n+2}$ that satisfy the condition in the third case above. Then, the mapping $g$ is an involution on $\UP_{2n+2}$ preserving inversion numbers, fixing elements in $\UP^*_{2n+2}$ and sign-reversing on $\UP_{2n+2}\setminus\UP^*_{2n+2}$. Thus,
\begin{align*}
\sum_{(\pi,\sigma)\in\UP_{2n+2}}\sgn(\pi,\sigma)q^{\inv(\pi,\sigma)}&=\sum_{(\pi,\sigma)\in\UP^*_{2n+2}}\sgn(\pi,\sigma)q^{\inv(\pi,\sigma)}\\
&=\sum_{k=0}^n(-1)^k{2n+1\brack 2k}q^{2k}S_{2k}(q).
\end{align*}
This completes the proof in view of~\eqref{UP:left}. 
\end{proof}

Finally, we emphasize that our approach via permutation pairs also works for Huber and Yee's identities~\cite{Yee} for some different $q$-secant and $q$-tangent numbers involving inversions and half descents. For simplicity, we only illustrate the ideas for one of their identity concerning the $q$-secant numbers 
$$
S^o_{2n}(q):=\sum_{\pi\in\A'_{2n}}q^{\inv(\pi)+\des(\pi_o)}\quad\text{for $n\geq1$},
$$
where $\des(\pi_o):=|\{i\in[n-1]:\pi_{2i-1}>\pi_{2i+1}\}|$, called {\em half descents} of $\pi$ with odd indices. 
\begin{theorem}[\text{Huber and Yee~\cite[Theorem~3.1]{Yee}}]
Let $S^o_{0}(q):=q^{-1}$. Then
\begin{equation}\label{eq:yee}
\sum_{k=0}^n{2n\brack 2k}(-1)^kq^{(n-k)^2}S^o_{2k}(q)=0.
\end{equation}
\end{theorem}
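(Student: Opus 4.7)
The plan is to extend the permutation-pair template of Theorems~\ref{thm:q-sec:tan} and~\ref{thm:q-analog2} to~\eqref{eq:yee} by interpreting the left-hand side as a signed weighted enumeration and then exhibiting a sign-reversing, weight-preserving involution with \emph{no} fixed points. Define $\CP_{2n}$ to be the set of pairs $(\pi,\sigma)$ on $[2n]$ in which $\sigma\in\A'_{2k}$ is an up-down permutation (allowed to be empty) of some subset $B\subseteq[2n]$ of even size $2k$, and $\pi$ is the \emph{block-reverse} listing of its support $A=[2n]\setminus B$: if $A=\{a_1<a_2<\cdots<a_{2m}\}$ with $m=n-k$, then $\pi=a_{m+1}a_{m+2}\cdots a_{2m}a_1a_2\cdots a_m$. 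A direct count gives $\inv(\pi)=m^2$, and together with~\eqref{eq:qmul} this yields
$$
\sum_{(\pi,\sigma)\in\CP_{2n}}(-1)^k\,q^{\inv(\pi\cdot\sigma)+\des(\sigma_o)}=\sum_{k=0}^n(-1)^k{2n\brack 2k}q^{(n-k)^2}S^o_{2k}(q),
$$
where by convention $\des(\sigma_o):=-1$ when $\sigma$ is empty, reflecting $S^o_0(q)=q^{-1}$.

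To prove~\eqref{eq:yee} it then suffices to design an involution $h\colon\CP_{2n}\to\CP_{2n}$, without fixed points, that flips the sign $(-1)^k$ and preserves the weight $\inv(\pi\cdot\sigma)+\des(\sigma_o)$. The move exchanges two letters at the boundary between $\pi$ and $\sigma$. Writing $X=a_{m+1}$ (the first letter of $\pi$) and $Y=a_m$ (the last letter of $\pi$), and letting $\sigma_1<\sigma_2$ denote the initial valley-peak pair of $\sigma$ when nonempty, the rule has two reciprocal modes. The \emph{extraction} mode applies when both $\pi,\sigma$ are nonempty and $Y<\sigma_1<\sigma_2<X$: remove $\sigma_1,\sigma_2$ from $\sigma$ and splice them into the support of $\pi$, so that $\sigma_2$ becomes the new first letter of the updated $\pi'$ and $\sigma_1$ its new last letter; the bracketing hypothesis is exactly what guarantees the block-reverse structure is preserved. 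The \emph{insertion} mode applies in all remaining cases: splice $\{X,Y\}$ into $\sigma$ within its first few slots to produce a fresh up-down permutation, with the precise placement dictated by a small case analysis on the linear order of $\{Y,X,\sigma_1,\sigma_2\}$ (and degenerating naturally when $\sigma$ is empty to give $\sigma'=YX$). By construction the two modes are mutually inverse and the dichotomy leaves no room for fixed points.

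The main obstacle is verifying that the insertion mode preserves the weight $\inv(\pi\cdot\sigma)+\des(\sigma_o)$: moving $(X,Y)$ out of $\pi$ and into the first slots of $\sigma$ alters $\inv(\pi\cdot\sigma)$ in a way that is not automatically zero. The key observation I expect to establish is that, depending on the relative order of $X,Y$ with $\sigma_1,\sigma_2$, the insertion either creates or destroys exactly one odd-indexed descent of $\sigma_o$ — precisely the statistic tracked by $S^o_{2k}(q)$ — and the change in $\des(\sigma_o)$ exactly cancels the change in $\inv$. I would verify this case by case across the finitely many linear orders of $\{Y,X,\sigma_1,\sigma_2\}$; a quick check at $n=2$ already pairs the five elements of $\A'_4$ with the six $k=1$ pairs and the single $k=0$ configuration $(3412,\emptyset)$ with matching weights, giving me confidence that the general pattern holds. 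The most delicate points are the boundary cases $|\sigma|=0$ (where the convention $\des(\emptyset_o)=-1$ is exactly what lets the $k=0$ term be absorbed, by pairing $(\pi,\emptyset)$ with the configuration obtained by peeling off the first and last letters of $\pi$) and $|\pi|=0$ (where the extracted pair must be unambiguously identified inside $\sigma$ by inverting the insertion rule). Once this bookkeeping is completed, $h$ is a fixed-point-free sign-reversing involution and~\eqref{eq:yee} follows; the same template should then adapt to the remaining Huber-Yee identities by changing the shape imposed on $\pi$ and the statistic carried by $\sigma$.
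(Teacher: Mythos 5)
Your generating-function setup is sound and is essentially a mirror image of the paper's: you make the alternating component carry the statistic $\des(\sigma_o)$ and encode $q^{(n-k)^2}$ via a rigid word of fixed inversion number ($m^2$ for your block-reverse listing, versus the paper's ``compressed up-down'' word with $\inv=l^2-l$ plus a correction term $l$ in the weight). The identity reducing the left-hand side of~\eqref{eq:yee} to a signed sum over your $\CP_{2n}$ is correct, including the $\des(\emptyset_o)=-1$ convention for $S^o_0(q)=q^{-1}$.

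The gap is the involution itself, which is the entire content of the proof and which you explicitly defer (``I would verify this case by case\dots Once this bookkeeping is completed''). Your two-mode dichotomy cannot work as stated. Since extraction always removes the \emph{first two} letters of $\sigma$, its inverse must always prepend the pair $YX$ to $\sigma$; but a direct count shows that moving $X=a_{m+1}$ and $Y=a_m$ from your block-reverse $\pi$ to the front of $\sigma$ changes $\inv(\pi\cdot\sigma)$ by exactly $-1$ (only the pair $(X,Y)$ flips), while $\des(\sigma_o)$ increases by $\chi(Y>\sigma_1)$. So the weight is preserved only when $Y>\sigma_1$ (or $\sigma=\emptyset$); in the remaining insertion cases --- e.g.\ $Y<\sigma_1$ but $\sigma_2>X$, or $X<\sigma_1$, where $YX\sigma$ is not even up-down --- the naive splice loses weight or leaves $\CP_{2n}$. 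This is precisely why the paper's involution $h$ needs four cases rather than two: its cases (b-1) and (b-2) do not merely transfer the boundary pair but additionally \emph{switch} one transferred letter with a carefully chosen letter elsewhere in the word, changing $\inv$ by exactly $\mp1$ to cancel the change in the half-descent statistic. Your proposal contains no analogue of this compensating switch, and without it the map is neither weight-preserving nor well-defined as an involution on all of $\CP_{2n}$. To repair the argument you would need to specify, for each linear order of $\{Y,X,\sigma_1,\sigma_2\}$, both the placement rule and the compensating exchange, and then check that the resulting map is self-inverse --- which is exactly the content of the paper's cases (a-1)--(b-2).
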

\begin{proof}
Let us consider the set $\CP_{2n}$ of all permutation pairs $(\pi,\sigma) $ on $[2n]$ such that
\begin{itemize}
\item $\pi=\pi_1\pi_2\cdots\pi_{2k}$ is up-down with $k\geq0$;
\item and $\sigma=\sigma_1\sigma_2\cdots\sigma_{2l}$ is {\em compressed up-down} with $k+l=n$, that is, 
$$
\sigma_1<\sigma_3<\cdots<\sigma_{2l-1}<\sigma_{2l}<\sigma_{2l-2}<\cdots<\sigma_{4}<\sigma_2. 
$$ 
 \end{itemize}
Note that $\inv(\sigma)=l^2-l$. Define the weight of $(\pi,\sigma)$ as 
$$
\wt(\pi,\sigma):=\inv(\pi,\sigma)+\des(\pi_o)-\chi(k=0)+l.
$$
By the interpretation~\eqref{eq:qmul} of $q$-binomial coefficients, we have 
\begin{equation}\label{CP:left}
\sum_{(\pi,\sigma)\in\CP_{2n}}\sgn(\pi,\sigma)q^{\wt(\pi,\sigma)}=\sum_{k=0}^n(-1)^kq^{(n-k)^2}{2n\brack 2k}S^o_{2k}(q). 
\end{equation}
It remains to construct a weight preserving sign-reversing involution on $\CP_{2n}$. 
Given $(\pi,\sigma)\in\CP_{2n}$ with $\pi=\pi_1\pi_2\cdots\pi_{2k}$ and $\sigma=\sigma_1\sigma_2\cdots\sigma_{2l}$, we construct the mapping $h: (\pi,\sigma)\mapsto h(\pi,\sigma)$ according to the following four cases.
\begin{itemize}
\item[(a-1)] \blue{$\pi_{2k-1}>\sigma_1$ (the case $k=0$ is included in this case, for convenience's sake).} In this case, set $h(\pi,\sigma)=(\pi\sigma_1\sigma_2,\sigma_3\cdots\sigma_{2l})$. 
\item[(a-2)] \blue{$\pi_{2k-1}<\sigma_1$, $\pi_{2k}>\sigma_2$ and either $\pi_{2k-3}>\pi_{2k-1}$ or $k=1$ (the case $l=0$ and either $\pi_{2k-3}>\pi_{2k-1}$ or $k=1$ is included in this case).} In this case, set $h(\pi,\sigma)=(\pi_1\pi_2\cdots\pi_{2k-2},\pi_{2k-1}\pi_{2k}\sigma)$. 
\item[(b-1)] \blue{$\pi_{2k-1}<\sigma_1$, $\pi_{2k}>\sigma_2$ and $\pi_{2k-3}<\pi_{2k-1}$ (the case $l=0$ and $\pi_{2k-3}<\pi_{2k-1}$ is included in this case).} In this case, let $(\pi',\sigma')$ be obtained from $(\pi,\sigma)$ by switching the letter $\pi_{2k-2}$ with the greatest letter in $\{\pi_{2k-1},\sigma_1,\sigma_2,\ldots,\sigma_{2l}\}$ that is smaller than $\pi_{2k-2}$. Thus, $\inv(\pi',\sigma')=\inv(\pi,\sigma)-1$. Now set $h(\pi,\sigma)=(\pi_1'\pi_2'\cdots\pi_{2k-2}',\pi_{2k-1}'\pi_{2k}'\sigma')$. 
\item[(b-2)] \blue{$\pi_{2k-1}<\sigma_1$ and $\pi_{2k}<\sigma_2$.} In this case, let $(\pi',\sigma')$ be obtained from $(\pi,\sigma)$ by switching the letter $\pi_{2k}$ with the smallest letter in $\sigma$ that is greater than $\pi_{2k}$. Thus, $\inv(\pi',\sigma')=\inv(\pi,\sigma)+1$. Now set 
$h(\pi,\sigma)=(\pi'\sigma_1'\sigma_2',\sigma_3'\sigma_4'\cdots\sigma'_{2l})$. 
\end{itemize}
It is routine to check that $h$ is a weight preserving sign-reversing involution on $\CP_{2n}$. This completes the proof of the theorem. 
\end{proof}

%%%%%%%%%%%%%%%%%%
\section*{Acknowledgement} %%%%
%%%%%%%%%%%%%%%%%%
The work presented here was initiated while the second author was visiting the first author at KAIST and the Institute for Basic Science (IBS) in Daejeon, South Korea. The authors thank Prof. Sang-il Oum for supporting the first author's research visit to IBS. This work was also supported by the National Science Foundation of China grant 12322115.

\end{document}